\newtheorem{thm}{Theorem}
\newtheorem{coro}{Corollary}
\theoremstyle{remark}
\newtheorem{rmk}{Remark}
\DeclareMathOperator{\Vol}{Vol}
\DeclareMathOperator{\dist}{dist}
\newcommand{\del}{\partial}
\begin{document}
\title[Rellich-Christianson identities on polytopes]
      {Rellich-Christianson type identities for the
       Neumann data mass of Dirichlet eigenfunctions on polytopes}

\author{Antoine Métras}
\address{Département de mathématiques et de statistique, Université de 
         Montréal, 2920, Chemin de la Tour, Montréal, Qc, H3T 1J4, Canada}
\email{metrasa@dms.umontreal.ca}
\maketitle

\begin{abstract}
    We consider the Dirichlet eigenvalue problem on a simple polytope.
    We use the Rellich identity to obtain an explicit formula expressing
    the Dirichlet eigenvalue in terms of the Neumann data on the faces of the
    polytope of the corresponding eigenfunction. The formula is particular
    simple for polytopes admitting an inscribed ball tangent to all the
    faces. Our result could be viewed as a generalization of
    similar identities for simplices recently found by Christianson
    \cite{Chr17-1, Chr17-2}.
\end{abstract}

\section{Introduction and main results}

Let $P$ be a simple (i.e. non self-intersecting) $n$-dimensional polytope 
in $\mathbb{R}^n$ with faces $F_1, \dots, F_k$. Let $f$ be a 
solution of the Dirichlet problem with eigenvalue $\lambda$:
\begin{align*}
    \begin{cases}
        (\Delta + \lambda) f = 0 &\qquad \text{in } P \\
        f = 0 &\qquad \text{on } \del P
    \end{cases}
\end{align*}
where $\Delta$ is the Laplacian and $f$ is assumed
to be normalized $\|f\|_{L^2} = 1$.
We are interested in the Neumann data mass on each face of the
polytope, defined on the face $F_i$ by
\begin{align*}
    \int_{F_i} |\del_{\nu} f|^2 dV
\end{align*}
where $\del_{\nu}$ is the outward pointing normal derivative on $F_i$.

In the case of a compact manifold in $\mathbb{R}^n$, general upper and lower
bounds for the Neumann data on the whole boundary are known \cite{HaT02}, while
in the specific case of $n$-dimensional simplices, Christianson showed
the equidistribution of the Neumann data on the faces \cite{Chr17-2}.
Inspired by these results, we investigate, using the Rellich identity,
similar equalities for the Neumann data mass of a Dirichlet eigenfunction 
on the boundary of a  polytope.

As Christianson showed with the example of a square \cite{Chr17-1},
it is not true in general that the Neumann
data mass is equidistributed between each faces. So we will instead consider
the sum over all the faces.

We first define some notations to express our result in simple terms. 
Given a point $p$, we write $C_i(p)$ for the pyramid with base
the face $F_i$ and apex $p$. We define $\Vol_n^*(C_i(p))$
to be the signed volume of $C_i(p)$, with the sign given by the sign
of $-v \cdot \nu$ where $v$ is any vector from the base to the apex
of the pyramid and $\nu$ is the outward normal vector to $F_i$.
This is defined such that if the polytope is convex and $p$ lies inside it,
all the $\Vol_n^*(C_i(p))$ are positive, while some are
negative when $p$ is outside of the polytope (see figure \ref{fig-pyramids}
for an example).
We also let $\dist^*(p,F_i)$ be the signed distance between $p$
and $F_i$ with the same sign as $\Vol_n^*(C_i(p))$.
With this notation our result is

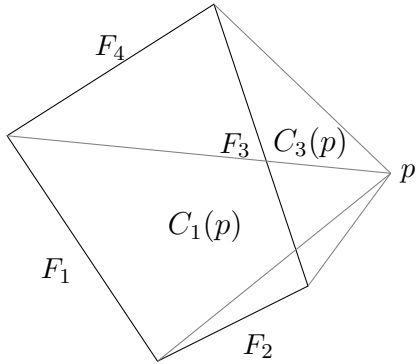
\begin{figure}
    \centering
    \begin{tikzpicture}
        \coordinate [label=right:$p$]   (p)     at (5.1, -0.5);
        \coordinate                     (p1)    at (0,0);
        \coordinate                     (p2)    at (2,-3);
        \coordinate                     (p3)    at (4,-2);
        \coordinate                     (p4)    at (2.75, 1.75);

        \draw   (p1) -- node[below left]    {$F_1$}
                (p2) -- node[below right]   {$F_2$}
                (p3) -- node[left]          {$F_3$}
                (p4) -- node[above]         {$F_4$}
                cycle;

        \foreach \p in {(p1), (p2), (p3), (p4)}
            \draw [gray] \p -- (p);
        \coordinate [label=right:{\large $C_1(p)$}] (c1) at (2,-1.2);
        \coordinate [label=right:{\large $C_3(p)$}] (c3) at (3.4, -0.1);
    \end{tikzpicture}
    \caption{For a quadrilateral, the pyramids 
    $C_1(p)$ and $C_3(p)$ are shown, with this specific
    choice of $p$ making $\Vol_2^*(C_1(p))$
    positive while $\Vol_2^*(C_3(p))$ is negative.}
    \label{fig-pyramids}
\end{figure}

\begin{thm} \label{main-thm}
    Let $P$ a simple $n$-polytope with faces $F_1, \dots, F_k$ and let 
    $p \in \mathbb{R}^n$ be an arbitrary point. Then
    \begin{align} \label{eq:main}
        \sum_{i = 1}^k \dist^*(p,F_i) \int_{F_i} |\del_\nu f|^2 dV =
        2 \lambda
    \end{align}
    or equivalently
    \begin{align}
        \sum_{i = 1}^k \frac{\Vol_n^*(C_i(p))}{\Vol_{n-1}(F_i)}
        \int_{F_i} |\del_{\nu} f|^2 dV = \frac{2\lambda}{n}
    \end{align}
\end{thm}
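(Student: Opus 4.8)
The plan is to apply the Rellich--Pohozaev identity generated by the radial vector field $X(x) = x - p$ based at the chosen point $p$, and then to exploit the defining feature of a polytope: each face lies in a hyperplane, so that $X \cdot \nu$ is \emph{constant} along each face. The starting point is the pointwise divergence identity for the vector field $W = (X\cdot\nabla f)\nabla f - \tfrac12 X|\nabla f|^2$. Using $\nabla\cdot X = n$ and $\partial_j X_i = \delta_{ij}$, a direct computation (the terms involving $X\cdot\nabla|\nabla f|^2$ cancel) gives
\begin{align*}
    \nabla\cdot W = (X\cdot\nabla f)\,\Delta f + \Bigl(1 - \tfrac{n}{2}\Bigr)|\nabla f|^2.
\end{align*}
Substituting $\Delta f = -\lambda f$ and rewriting $(X\cdot\nabla f)\,\Delta f = -\tfrac{\lambda}{2}\,X\cdot\nabla(f^2)$ prepares both terms for integration.

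Next I would integrate over $P$ and apply the divergence theorem. For the interior side, integrating by parts and using $f = 0$ on $\del P$ yields $\int_P X\cdot\nabla(f^2)\,dV = -n\int_P f^2\,dV = -n$ and $\int_P |\nabla f|^2\,dV = \lambda\int_P f^2\,dV = \lambda$, where both use the normalization $\|f\|_{L^2}=1$; these combine to give exactly $\lambda$ on the right. For the boundary side, the Dirichlet condition forces the tangential gradient to vanish, so $\nabla f = (\del_\nu f)\,\nu$ on $\del P$, whence $W\cdot\nu = \tfrac12 (X\cdot\nu)\,(\del_\nu f)^2$. Equating the two sides produces the global Rellich identity
\begin{align*}
    \int_{\del P} (X\cdot\nu)\,|\del_\nu f|^2 \, dV = 2\lambda.
\end{align*}

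It then remains to split this boundary integral over the faces and identify the geometric factors. On $F_i$ the outward normal $\nu_i$ is constant and $x\cdot\nu_i$ is constant, so $(x-p)\cdot\nu_i$ is a single number that I claim equals $\dist^*(p,F_i)$: its absolute value is the distance from $p$ to the hyperplane of $F_i$, and its sign is that of $-v\cdot\nu_i$ for $v = p - x$ a base-to-apex vector, which is precisely the sign convention defining $\Vol_n^*(C_i(p))$ and hence $\dist^*$. Pulling this constant out of each face integral gives \eqref{eq:main}; the equivalent volume form follows from the pyramid relation $\Vol_n^*(C_i(p)) = \tfrac1n \Vol_{n-1}(F_i)\,\dist^*(p,F_i)$. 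The main obstacle is analytic rather than algebraic: the divergence theorem must be justified on a domain whose boundary is only piecewise smooth, since the eigenfunction and its normal derivative may be singular along the $(n-2)$-skeleton of edges and corners. I would address this by working on $P$ with a neighborhood of the lower-dimensional skeleton removed, controlling the flux of $W$ through the shrinking cut-off surfaces, and passing to the limit using the known regularity of $f$ up to the open faces together with the fact that the singular set has measure zero in $\del P$.
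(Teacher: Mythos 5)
Your proposal is correct and follows essentially the same route as the paper: both hinge on the Rellich identity $\int_{\del P}((x-p)\cdot\nu)\,|\del_\nu f|^2\,dV = 2\lambda$, the observation that $(x-p)\cdot\nu_i$ is constant on each face and equals $\dist^*(p,F_i)$, and the pyramid volume formula. The only differences are cosmetic: the paper cites Rellich directly (after translating $p$ to the origin) where you rederive the identity from the divergence of $W$, and you additionally flag the justification of the divergence theorem near the $(n-2)$-skeleton, a technical point the paper passes over.
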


\begin{rmk} \label{rmk-eq-trig}
    If there exists a point $p$ making all the pyramids $C_1(p), \dots,
    C_k(p)$ have the same signed volume, the second equality can be rewritten as
    \begin{align}\label{eq:trig-equal}
        \sum_{i = 1}^k \frac{\int_{F_i} |\del_{\nu} f|^2 dV}{\Vol_{n-1}(F_i)}
        = \frac{2 k \lambda}{n \Vol_n(P)}.
    \end{align}
    Such a point does not exist for all polytopes. For example, on
    a polygon $P$, this point would be at the intersection of lines
    $L_1, \dots, L_k$ where $L_i$ is parallel to $F_i$ with a distance of
    $\frac{2\Vol_2(P)}{k\Vol_1(F_i)}$  between the two. It is clear
    when considering a trapezoid that those lines need
    not intersect at a common point.
\end{rmk}

\begin{rmk}
    Christianson's result \cite{Chr17-2} on simplices can be 
    obtained by putting $p$ on one of the vertex of the
    simplex to obtain the equality for the face opposite to it.
    Indeed in this case, all the pyramid $C_i(p)$ have volume 0
    except the one with base the opposite face. In fact the proof
    of the theorem was inspired by his proof in \cite{Chr17-1}
    and simplified using the Rellich identity.
\end{rmk}

The equality \eqref{eq:main} implies a simpler result when considering
polytopes with special geometric properties, 

\begin{coro}
    If $P$ is a tangential polytope, i.e. $P$ has an inscribed ball
    tangent to all its faces, then
    \begin{align}
        \int_{\del P} |\del_{\nu} f|^2 dV
        = \frac{\Vol_{n-1}(\del P)}{\Vol_n(P)} \frac{2 \lambda}{n} .
    \end{align}
\end{coro}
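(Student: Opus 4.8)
The plan is to apply Theorem \ref{main-thm} with the arbitrary point $p$ chosen to be the center $o$ of the inscribed ball, and then to eliminate the resulting inradius by a volume decomposition of $P$ from that same center.

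First I would record that a tangential polytope is convex and that its incenter $o$ lies in its interior, so by the sign conventions fixed before the theorem every signed distance $\dist^*(o,F_i)$ is positive and in fact equals the common inradius $r$, since the ball is tangent to each face. Plugging $p = o$ into \eqref{eq:main} then collapses the sum, because the coefficient $\dist^*(o,F_i)=r$ no longer depends on $i$:
\[
    r \sum_{i=1}^k \int_{F_i} |\del_\nu f|^2 \, dV
    = r \int_{\del P} |\del_\nu f|^2 \, dV
    = 2\lambda .
\]

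Next I would express $r$ in terms of the volumes appearing in the claim. Since $o$ is interior to the convex polytope, the pyramids $C_i(o)$ tile $P$ with pairwise disjoint interiors, so $\Vol_n(P) = \sum_{i=1}^k \Vol_n(C_i(o))$. Each such pyramid has base $F_i$ and height $\dist(o,F_i)=r$, whence $\Vol_n(C_i(o)) = \tfrac{1}{n}\, r\, \Vol_{n-1}(F_i)$. Summing over $i$ gives $\Vol_n(P) = \tfrac{r}{n}\,\Vol_{n-1}(\del P)$, that is $r = n\,\Vol_n(P)/\Vol_{n-1}(\del P)$.

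Finally, solving the collapsed identity for the Neumann data mass as $\int_{\del P}|\del_\nu f|^2\,dV = 2\lambda/r$ and substituting this value of $r$ produces the stated formula. I do not expect a genuine obstacle here, as the computation is short; the only points deserving care are the justification that tangency forces all the signed distances to coincide and to be positive, and the verification that the cone decomposition of $P$ from the incenter is non-overlapping, both of which follow directly from convexity.
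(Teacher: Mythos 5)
Your proposal is correct and follows essentially the same route as the paper: apply Theorem \ref{main-thm} with $p$ at the incenter so that all signed distances equal the inradius, then use the pyramid decomposition $\Vol_n(P)=\sum_i\Vol_n(C_i(p))=\tfrac{r}{n}\Vol_{n-1}(\del P)$ to eliminate $r$. The only cosmetic difference is that you solve for $r$ explicitly while the paper substitutes $\Vol_{n-1}(F_i)=n\Vol_n(C_i(p))/h$ directly into the sum.
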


Restricting ourselves to polygons, one can also follow up on remark 
\ref{rmk-eq-trig} and try
to find geometric properties of polygon with inner triangles of same area:

\begin{coro}
    If $Q$ is a quadrilateral with one diagonal intersecting the
    other diagonal in its middle then there exists a point $p$
    such that the $\Vol_{2}^*(C_i(p))$ are all equal and
    \begin{align} \label{eq:quadri}
        \sum_{i = 1}^4 \frac{\int_{F_i} |\del_{\nu} f|^2 dV}{
        \Vol_{1}(F_i)} = \frac{4 \lambda}{\Vol_{2}(Q)} .
    \end{align}
\end{coro}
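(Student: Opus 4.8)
The plan is to reduce the statement to the existence of a suitable point $p$ and then invoke Remark~\ref{rmk-eq-trig}. Indeed, the right-hand side of \eqref{eq:quadri} is exactly the right-hand side of \eqref{eq:trig-equal} in the case $n = 2$, $k = 4$, since $\frac{2k\lambda}{n\Vol_2(Q)} = \frac{8\lambda}{2\Vol_2(Q)} = \frac{4\lambda}{\Vol_2(Q)}$. Hence it is enough to exhibit a point $p$ for which all four signed areas $\Vol_2^*(C_i(p))$ coincide; the identity then follows immediately from the remark, and no analysis on the eigenfunction is needed beyond what is already packaged there.

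So the whole content is a planar geometric claim: if $Q = ABCD$ is labelled so that the diagonal $AC$ passes through the midpoint $M$ of the diagonal $BD$, then the midpoint $N$ of $AC$ makes the four triangles $C_i(N)$ equal in signed area. First I would observe that the diagonal $AC$ cuts $Q$ into the two triangles $ABC$ and $ACD$, whose areas sum to $\Vol_2(Q)$. Taking $p = N$, the segment $BN$ is a median of $ABC$ and the segment $DN$ is a median of $ACD$, so $N$ halves each of these two triangles: the triangles built on $F_1 = AB$ and $F_2 = BC$ each have area $\tfrac12\Area(ABC)$, while those built on $F_3 = CD$ and $F_4 = DA$ each have area $\tfrac12\Area(ACD)$.

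The key remaining step is to show $\Area(ABC) = \Area(ACD)$, and this is exactly where the hypothesis enters. Writing $\phi$ for the affine signed distance to the line through $A$ and $C$, one has $\phi(M) = \tfrac12\bigl(\phi(B) + \phi(D)\bigr)$ because $M$ is the midpoint of $BD$. Since $M$ lies on the line $AC$ we get $\phi(M) = 0$, hence $\phi(B) = -\phi(D)$, i.e. $B$ and $D$ are equidistant from the line $AC$. As both triangles have base $AC$, their areas agree, so each equals $\tfrac12\Vol_2(Q)$ and all four triangles $C_i(N)$ have the common area $\tfrac14\Vol_2(Q)$. Working throughout with signed areas, which are affine in $p$, keeps the orientations consistent, so the signed volumes $\Vol_2^*(C_i(N))$ are genuinely equal; applying Remark~\ref{rmk-eq-trig} with $p = N$ then yields \eqref{eq:quadri}.

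I expect the main obstacle to be pinpointing the correct point rather than any hard estimate: the naive guess $p = M$ only forces equality within the two pairs $\{C_1, C_4\}$ and $\{C_2, C_3\}$ and fails unless $Q$ is a parallelogram, so one must instead take the midpoint of the \emph{bisecting} diagonal $AC$ and supply the equidistance argument above. A secondary point to watch is the sign bookkeeping in $\Vol_2^*$, which is handled cleanly by phrasing everything through the affine signed-distance function $\phi$.
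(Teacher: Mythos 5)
Your proposal is correct and follows essentially the same route as the paper: take $p$ to be the midpoint of the bisecting diagonal $AC$, show $B$ and $D$ are equidistant from the line $AC$, and conclude the four triangles have equal area before invoking Remark~\ref{rmk-eq-trig}. The only (cosmetic) difference is that you derive the equidistance from the affinity of the signed distance function at the midpoint $M$ of $BD$, whereas the paper argues via a pair of congruent right triangles formed by the orthogonal projections of $B$ and $D$ onto $AC$.
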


\begin{rmk}
    This corollary applies to any parallelogram.
\end{rmk}

\section{Proofs}

\begin{proof}[Proof of Theorem \ref{main-thm}]
Without loss of generality, after a translation
in $\mathbb{R}^n$, we can assume $p$ is at the origin.
Let $p_i$ be the orthogonal projection of $p$ on $F_i$.
In the following, we consider points in $\mathbb{R}^n$ as vectors.

We consider Rellich's identity for Dirichlet eigenvalues \cite{Rel40}:
\begin{align}
    2 \lambda = \int_{\del P} (\nu, q) |\del_{\nu} f|^2 dV(q).
\end{align}
This integral is split over the faces $F_i$ of the polytope, 
writing $\nu_i$ for the normal vector to $F_i$.
One then remark that $F_i \subset p_i + p_i^{\perp}$, where $p_i^{\perp}$
is the orthogonal hyperplane to $p_i$, and by construction, $p_i$ and 
$\nu_i$ are parallel so $(\nu_i, q) = (\nu_i, p_i)$ for all $q \in F_i$.
Thus
\begin{align*}
    \int_{F_i} (\nu_i, q) |\del_{\nu} f|^2 dV(q) &=
    (\nu_i, p_i) \int_{F_i} |\del_{\nu} f|^2 dV.
\end{align*}
Defining $I_i = \int_{F_i} |\del_{\nu} f|^2 dV$ and summing over all the
sides of the polytope, we obtain the first equality of the theorem
\begin{align*}
    2 \lambda &= \int_{\del P} (\nu, q) |\del_{\nu} f|^2 dS(q) 
    = \sum_{i = 1}^k (\nu_i, p_i) I_i \\
    &= \sum_{i = 1}^k |p_i| (\nu_i, p_i/|p_i|) I_i 
    = \sum_{i = 1}^k \dist^*(p, F_i) I_i
\end{align*}
Here $\dist^*(p, F_i) = |p_i| (\nu_i, p_i/|p_i|) = \dist(p, F_i)
(\nu, p_i/|p_i|)$ is the signed distance
between $p$ and $F_i$. Finally we use the formula for the volume
of a pyramid, $\Vol_n(C_i(p)) = \frac{1}{n} \dist(p, F_i) \Vol_{n-1}(F_i)$,
to obtain
\begin{align*}
    \frac{2 \lambda}{n} = \sum_{i = i}^k \frac{\Vol_n^*(C_i(p))}{\Vol_{n-1}
    (F_i)} I_i
\end{align*}

\end{proof}

\begin{proof}[Proof of Corollary 1.]
In the case of a tangential polytope $P$ the point $p$, apex of the
pyramids, is set to be the center of the inscribed ball. Then
all the distances $\dist^*(p, F_i)$ are equal to some $h$ by definition of
tangential polytope. Then by the first equality of theorem \ref{main-thm},
\begin{align*}
    \sum_{i = 1}^k \int_{F_i} |\del_\nu f|^2 dV = \frac{2 \lambda}{h}
\end{align*}

We also have for the volume of the boundary $\del P$,
\begin{align*}
    \Vol_{n-1}(\del P) &= \sum_{i = 1}^k \Vol_{n-1} (F_i)
    = \sum_{i = 1}^k \frac{n \Vol_n(C_i(p))}{h}
    = \frac{n}{h} \Vol_n(P)
\end{align*}
thus
\begin{align*}
    \sum_{i = 1}^k \int_{F_i} |\del_\nu f|^2 dV = \frac{\Vol_{n-1}(\del P)}
    {\Vol_n(P)} \frac{2 \lambda}{n}
\end{align*}

\end{proof}

\begin{proof}[Proof of Corollary 2.]
The vertices of the quadrilateral $Q$ are labeled $A,B,C,D$ and the
diagonal $D_1, D_2$ between $A,C$ and $B,D$ respectively. We suppose
without loss of generality that $D_1$ passes through the middle of
$D_2$ and name $E$ their intersection point (see figure \ref{fig-quad}).

\begin{figure}
    \centering
    \begin{tikzpicture}
        \coordinate [label=left:$A$]        (A)  at (0,0);
        \coordinate [label=below left:$B$]  (B)  at (0.5, -2);
        \coordinate [label=below right:$B'$](Bh) at (0.5,0);
        \coordinate [label=below right:$C$] (C)  at (4,0);
        \coordinate [label=above right:$D$] (D)  at (5.5, 2);
        \coordinate [label=right:$D'$]      (Dh) at (5.5, 0);
        \coordinate [label=above right:$p$] (p)  at (2,0);

        \node [below] (E) at ($ (B)!.5!(D) $) {$E$};

        \draw (A) -- (B) -- (C) -- (D) -- cycle;
        \draw (A) -- (C);
        \draw (B) -- (D);
        \tkzMarkSegment[pos=0.25,mark=||](B,D);
        \tkzMarkSegment[pos=0.75,mark=||](B,D);

        \draw [gray] (B) -- (p) -- (D);

        \draw [dashed] (B) -- (Bh);
        \draw [dashed] (D) -- (Dh) -- (C);
    \end{tikzpicture}
    \caption{}
    \label{fig-quad}
\end{figure}

We will show that by taking $p$ in the middle of $D_1$, the
triangles $ABp$, $BCp$, $CDp$ and $DAp$ have the same area thus
allowing us to use equation \eqref{eq:trig-equal}.

Let $B'$ and $D'$ be the orthogonal projection on $AC$ of $B$ and 
of $D$ respectively. The angles $\widehat{B'EB}$ and $\widehat{DED'}$
are equal and so are $\widehat{BB'E}$ and $\widehat{DD'E}$. 
Thus the triangles $B'BE$ and $DD'E$ are similar. But since
$E$ is the middle of $BD$, the edges $BE$ and $ED$ have the same length
which implies that the triangles $B'BE$ and $DD'E$ are in fact congruent.
Thus the lengths of $BB'$ and $DD'$ are equal. 

By taking $p$ in the middle of $AC$, we obtain that $Ap$ and $Cp$
have the same length. Then since the area of a triangle is given
by the length of its base times its height, we conclude that
the triangles $ABp$, $BCp$, $CDp$ and $DAp$ have the same area.
Equation \eqref{eq:trig-equal} then gives the desired result.

\end{proof}

\subsection*{Acknowledgements}
    This paper is part of  a MSc thesis written under the
    supervision of Iosif Polterovich. I would like to thank him
    for bringing to my attention the Rellich identity, which greatly
    simplified my proof.

\bibliographystyle{hunsrt}
\bibliography{biblio}

\end{document}